\documentclass[a4paper,11pt]{article}


\usepackage[T1]{fontenc}
\usepackage[utf8]{inputenc}
\usepackage{graphicx}
\usepackage{amssymb, amsthm, amsmath}
\usepackage{mathtools,hyperref}
\usepackage{bbm}
\usepackage{tikz}


\newtheorem{theorem}{Theorem}[section]

\newtheorem{remark}[theorem]{Remark}
\newtheorem{lemma}[theorem]{Lemma}
\newtheorem*{theorem*}{Theorem}

\DeclareMathOperator{\E}{\mathbb{E}}
\DeclareMathOperator{\p}{\mathbb{P}}
\DeclareMathOperator{\Var}{Var}
\newcommand{\N}{\mathbb{N}}
\newcommand{\1}{\mathbbm{1}}

\title{A universal right tail upper bound 
	for supercritical Galton-Watson processes
	with bounded offspring}

\author{
	John Fernley\footnote{Alfr\'ed R\'enyi Institute of Mathematics, Budapest, Hungary.
		email: {fernley@renyi.hu}}
	\and 
	Emmanuel Jacob\footnote{\'Ecole Normale Sup\'erieure de Lyon,
		France. email: {emmanuel.jacob@ens-lyon.fr}}
}

\begin{document}

\maketitle

\abstract{
We consider a supercritical Galton-Watson process $Z_n$ whose offspring distribution has mean $m>1$ and is bounded by some $d\in \{2,3,\ldots\}$. As is well-known, the associated martingale $W_n=Z_n/m^n$ converges a.s. to some nonnegative random variable $W_\infty$. We provide a universal upper bound for the right tail of $W_\infty$ and $W_n$, which is uniform 
	in $n$ and in all offspring distributions with given $m$ and $d$, namely:
	\[
	\p(W_n\ge x)\le c_1 \exp\left\{-c_2 \frac {m-1}m \frac x d\right\}, \quad \forall n\in \N\cup \{+\infty\},  \forall x\ge 0,
	\]
	for some explicit constants $c_1,c_2>0$.
	For a given offspring distribution, our upper bound decays exponentially as $x\to \infty$, which is actually suboptimal, but our bound is \emph{universal}: it provides a single \emph{effective} expression -- which does not require large $x$ -- and is valid simultaneously for all supercritical bounded offspring distributions.
}


\section{Introduction}

Consider a Galton-Watson process $(Z_n)_{n\ge 0}$ with $Z_0=1$ where the offspring distribution 
has expectation $m=\E[Z_1]>1$ and $d=\sup\{k\in \N, \p(Z_1=k)>0\}<+\infty$. In other words, the GW process is supercritical with bounded offspring distribution. By definition,
\[Z_{n+1}=\sum_{k=1}^{Z_n} \xi_{n,i},\]
where $(\xi_{n,i})$ is a collection of independent identically distributed random variables with the same distribution as $Z_1$. The martingale $W_n=Z_n/m^n$ converges almost surely and in $L^2$ to some random variable $W_\infty\ge 0$, see for example \cite{athreyaney}. 
The seminal work of Harris~\cite{harris48} shows that $W_\infty$ has finite exponential moments and provides the following estimates for the generating function:
\[
\varphi(x):=\log \E e^{xW_\infty} = x^{\gamma} C(x)+ O(1) \quad \text{ as } x\to \infty,
\]
where $C$ is a positive multiplicatively periodic function (in particular it is bounded above and below in $(0,+\infty)$), $\gamma$ is defined by $m^\gamma=d$ or $\gamma=\ln d/\ln m$, and $O(1)$ is the standard Landau notation. Harris' work is based on the Poincaré functional equation satisfied by $\varphi$, namely $\varphi(mx)=f(\varphi(x))$ where $f$ is the generating function of $Z_1$. Biggins and Bingham~\cite{biggins1993large} could use this result to provide asymptotic estimates for the tail of $W_n$ for both cases $n<+\infty$ or $n=+\infty$. This was later refined by Fleischmann and Wachtel in~\cite{wachtel2009LeftTail}, who were mainly studying the left tail of $W_\infty$, but also expressed for the right tail the following asymptotics (see Remark 3 in their paper):
\begin{equation}\label{eq:FW}
\p(W_\infty>x) = (1+o(1)) x^{\frac {-\gamma}{2(\gamma-1)}} C_1(x) e^{- C_2(x) x^{\frac \gamma {\gamma-1}}},
\end{equation}
where $C_1$ and $C_2$ are again positive multiplicatively periodic functions. 
In 2013, Denisov, Korshunov, and Wachtel~\cite{wachtel2013tail} provide similar results when the offspring distribution is no more bounded but on the contrary heavy tailed, relying on probabilistic techniques for sums of i.i.d. random variables rather than on the Poincaré functional equation.  
Their article is also a good reference for an overview on the topic.
\bigskip

In this note, we seek for a more effective upper bound for the tail of $W_n$. We seek for a bound that would be \emph{nonasymptotic}, available for both $n$ finite or infinite, and above all \emph{uniform} over the possible choices of the offspring distribution, or at least only depending on $m$ and $d$. To motivate this, observe that the optimal strategy leading to large values of $W_\infty$ often stems in just asking the first generations to have maximal degree (see~\cite{wachtel2009LeftTail}). For example, if you assume $\eta=\p(Z_1=d)>0$, you have $W_k=x=d^k/m^k$ with probability
\[
\eta^{1+d+\ldots+d^{k-1}}=\eta^{\Theta(d^{k-1})}=\eta^{\Theta\left( {x^{\gamma/(\gamma-1)}} d^{-1}\right)},
\]
which already somehow resembles the asymptotics given in~\eqref{eq:FW}. The maximal value of $\eta$ being of course $m/d$, we obtain a uniform upper bound for the event that $W_k$ equals its maximal possible value $d^k/m^k$. It is natural to ask whether we can obtain a similar uniform upper bound for the event $\{W_\infty>x\}$, possibly an upper bound that would feature the term $\exp(-c x^{\gamma/(\gamma-1)}d^{-1})$, or be as close as possible to~\eqref{eq:FW}.

We didn't find any such result in the literature. In this note, we thus provide, up to our knowledge, a first result in that direction. 

\begin{theorem}\label{thm:tail_GWBall}
	There are universal constants $c_1, c_2>0$ such that for any $m>1$, any $d\in\{2,3,\ldots\}$ and any offspring distibution on $\{0,\ldots, d\}$ with mean $m$, for all  $n\in \N \cup\{+\infty\}$ and $x\ge 0$, we have
	\begin{equation}\label{ineq:RTailUpperbound}
	\p(W_n\ge x)\le c_1 \exp\left\{-c_2 \frac {m-1}m \frac x d\right\}.
	\end{equation}
\end{theorem}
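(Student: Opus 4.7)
The plan is to control the Laplace transform $\varphi_n(\lambda) := \E[e^{\lambda W_n}]$ uniformly in $n$ and in the offspring distribution for a $\lambda$ of order $(m-1)/(md)$, and then conclude by Markov's inequality. Writing $f(s) := \E[s^{Z_1}]$ for the probability generating function of the offspring law, the branching property yields the classical identity $\varphi_n(\lambda) = f^{(n)}(e^{\lambda/m^n})$, with $f^{(n)}$ the $n$-fold composition. The problem thus reduces to controlling these iterates on $[1,\infty)$.

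The first step would be a worst-case domination. For $s \ge 1$ the map $k \mapsto s^k$ is convex on $[0,d]$, so $s^k \le (1 - k/d) + (k/d)\,s^d$ for every $k \in \{0,\ldots,d\}$; taking expectations against the offspring law of mean $m$ gives
\[
f(s) \le 1 - \tfrac{m}{d} + \tfrac{m}{d}\,s^d =: F(s),
\]
which is precisely the generating function of the extremal two-point law $(1 - m/d)\,\delta_0 + (m/d)\,\delta_d$ concentrated on the endpoints. Because $F$ is increasing on $[1,\infty)$ and $f$ maps $[1,\infty)$ into itself, a straightforward induction yields $f^{(n)}(s) \le F^{(n)}(s)$ for $s \ge 1$, hence $\varphi_n(\lambda) \le F^{(n)}(e^{\lambda/m^n})$. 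It then suffices to bound the iterates of the single extremal $F$, and any constants obtained are automatically universal in the offspring distribution.

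The second step is the analysis of the deterministic iteration via the substitution $u_k := d(F^{(k)}(e^{\lambda/m^n}) - 1)$, which, using $(1 + u/d)^d \le e^u$, transforms the recursion into $u_{k+1} \le m\,(e^{u_k} - 1) \le m\,u_k\,e^{u_k}$. Taking logarithms and summing gives
\[
u_k \le u_0\,m^k \exp\!\left(\sum_{j<k} u_j\right).
\]
Choosing $\lambda_0 := c(m-1)/(md)$ for a small absolute constant $c$ forces $u_0 \le 2c(m-1)/m^{n+1}$, and hence $u_0\,m^n/(m-1) \le 2c/m$. I would then run a bootstrap on the partial sums $S_k := \sum_{j<k} u_j$: the inductive hypothesis $S_k \le 1$ for $k \le K$ yields $u_k \le e\,u_0\,m^k$ on that range, so
\[
S_{K+1} \le e\,u_0 \sum_{k \le K} m^k \le \frac{e\,u_0\,m^n}{m-1} \le \frac{2ce}{m},
\]
which is strictly below $1$ as soon as $c \le 1/(2e)$, reinforcing the hypothesis. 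One concludes $u_n \le e\,u_0\,m^n \le (m-1)/m$, so $\varphi_n(\lambda_0) \le 1 + u_n/d \le 3/2$.

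The crux of the argument is precisely this bootstrap: the iterate $u_{k+1} \le m\,u_k\,e^{u_k}$ carries a nonlinear amplification $e^{u_k}$ sitting on top of the linear factor $m$, and the initial value has to be small by the extra factor $(m-1)$ beyond the naive $1/m^n$ to ensure that the nonlinear corrections accumulate to at most a bounded multiplicative factor over $n$ steps. Granted the uniform bound $\varphi_n(\lambda_0) \le 3/2$, Markov's inequality directly produces \eqref{ineq:RTailUpperbound} with explicit constants $c_1 = 3/2$ and $c_2 = 1/(2e)$; the case $n = +\infty$ follows from Fatou's lemma applied to the almost sure convergence $e^{\lambda_0 W_n} \to e^{\lambda_0 W_\infty}$.
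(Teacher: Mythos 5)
Your proof is correct, but it takes a genuinely different route from the paper. The paper argues probabilistically: it explores the tree in breadth-first order, decomposes the event $\{W_n>5adr\}$ into a tree $T$ of successive ``overshoot'' events, shows via Doob's $L^2$ maximal inequality that the internal nodes of $T$ are dominated by a subcritical $\operatorname{Pois}(1/3)$ branching process (so $|i_T|$ has exponential tail), and finishes with Bernstein's inequality conditionally on $T$. You instead work entirely on the Poincar\'e iteration of generating functions: by convexity of $k\mapsto s^k$ you dominate every admissible offspring law by the extremal two-point law $(1-m/d)\delta_0+(m/d)\delta_d$, which identifies the worst case and makes the universality transparent; you then track the iteration of its generating function at $e^{\lambda_0/m^n}$ with $\lambda_0=\tfrac{1}{2e}\tfrac{m-1}{md}$, and the bootstrap on $S_k=\sum_{j<k}u_j$ correctly shows that the nonlinear factors $e^{u_k}$ contribute at most a single factor $e$ over all $n$ steps, giving $\E[e^{\lambda_0 W_n}]\le 3/2$ uniformly; Markov's inequality and Fatou's lemma (for $n=\infty$) finish. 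The individual estimates check out: $u_0\le 2c(m-1)/m^{n+1}$ holds because $\lambda_0/m^n\le c/2<\log 2$, and the geometric sum in the bootstrap is only ever taken up to $K+1\le n$, so the bound by $eu_0m^n/(m-1)\le 2ce/m\le 1$ is legitimate. Your argument is shorter, avoids both maximal and concentration inequalities, and yields substantially better constants ($c_2=1/(2e)$, $c_1=3/2$ versus the paper's $c_2=(\log 3-1)/40$, $c_1=2$); what the paper's decomposition buys is a purely probabilistic description of how the large deviation is realized (a geometrically bounded number of excursions above level $adm^n$), which may transfer more readily to settings where the generating-function recursion is not available.
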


\begin{remark}
	Our original motivation for proving Theorem~\ref{thm:tail_GWBall} lies in the study of the contact process on power-law trees in~\cite{fj23}. In that work the other existing upper bounds were not really useful, and it required a more effective upper bound like~\eqref{ineq:RTailUpperbound}.
\end{remark}

\begin{remark}
	We didn't  seek optimal constants, but in our proof we can take $c_2=(\log 3-1)/40$. Less importantly, we can take $c_1=2$, or $c_1=1$ if we discard small values of $x$.
\end{remark}

\section{Proof of Theorem~\ref{thm:tail_GWBall}}

Heuristically, we obtain the upper bound by arguing that in order to have $\{W_n>xd\}$, we need to have simultaneous occurrence of $\Theta(x)$ unlikely events, corresponding to the existence of unusually fecund individuals. For each individual $x$ of the branching process, we define $|x|$ its generation, so the ancestor has generation 0, its direct children generation 1, and so on. For $n\ge |x|$ we define $Z^x_n$ the number of descendants of $x$ of generation $n$, so $Z^x_{|x|}=1$ and for each individual $x$, we have that $(Z^x_{n+|x|})_{n\ge 0}$ has the same law as $(Z_n)_{n\ge 0}$. We further define
 \[n_x:=\inf_n\{n>|x|, Z^x_{n}> ad m^{n}\},
 \]
 with $a$ some parameter that we will fix later on, and we say $x$ is \emph{fecund} if $n_x$ is finite, or in other words if the descendants of $x$ alone can give a contribution to $W_n$ (for some $n>|x|$) larger than $ad$.
 
 In order to determine that an individual $x$ is fecund, there is no need to reveal the whole progeny of $x$, or event the whole progeny up to generation $n_x$. It suffices to reveal its progeny up to generation $n_x-1$ (which automatically satisfies $Z^x_{n_x-1} \leq a d m^{n_x-1}$ by the definition of $n_x$), and then reveal one by one the offspring of these individuals of generation $n_x-1$, up to a point where we have revealed the presence of at least $adm^{n_x}$ individuals of generation $n_x$. Note that at this point:
 \begin{itemize}
 	\item There is a number $A_x$ of descendants of $x$ of generation $n_x-1$ with still unrevealed offspring, for some $A_x$ satisfying $0\le A_x<Z^x_{n_x-1}\le a d m^{n_x-1}$.
 	\item We have already revealed the presence of a number $B_x$ of descendants of $x$ of generation $n_x$, with
 	\[adm^{n_x}\le B_x<adm^{n_x}+d.
 	\]
 	The upper bound on $B_x$ here is a simple consequence of the fact that the offspring of each individual is bounded by $d$. It does not prevent that we might still have $Z^x_{n_x} \geq adm^{n_x}+d.$
 \end{itemize}
	We now construct recursively a multitype branching tree $T$ as follows\footnote{In order to improve readibility of this note, we try and reserve the terminology of individuals/population and so on to the original branching process, while we reserve the graph/tree terminology to this second multitype branching process.}:
\begin{itemize}
	\item The root of $T$ is the ancestor of the branching process, and has type 0. More generally, each vertex of the tree corresponds to or is actually iden-tified with some individual $x$ of the branching process, and has type $|x|$.
	\item If a vertex of $T$ corresponds to a fecund individual $x$ of the branching process, then in $T$ it has $A_x$ children ot type $n_x-1$ and $B_x$ children of type $n_x$, which correspond of course to the previously described descendants of $x$ in the branching process of generation $n_x-1$ and $n_x$.
	\item Otherwise, the vertex is a leaf of $T$.
\end{itemize}
Note that the internal nodes of $T$ are necessarily fecund individuals of the branching process, and in particular $T$ contains only one vertex if the ancestor is not fecund. 
The next two lemmas allow to bound the number of internal nodes of $T$, and then the size of $Z_n$.

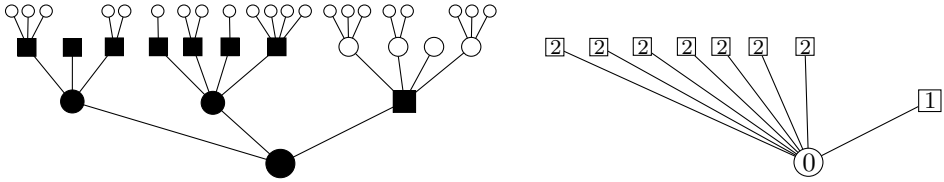
\begin{figure}


\begin{tikzpicture}[x=0.65pt,y=0.65pt,yscale=-1,xscale=1]
	
	\draw  [fill={rgb, 255:red, 0; green, 0; blue, 0 }  ,fill opacity=1 ] (264.24,69.66) -- (277.08,69.66) -- (277.08,82.34) -- (264.24,82.34) -- cycle ;
	\draw  [fill={rgb, 255:red, 0; green, 0; blue, 0 }  ,fill opacity=1 ] (192.08,39.14) -- (202.49,39.14) -- (202.49,49.42) -- (192.08,49.42) -- cycle ;
	\draw  [fill={rgb, 255:red, 0; green, 0; blue, 0 }  ,fill opacity=1 ] (165.1,39.14) -- (175.51,39.14) -- (175.51,49.42) -- (165.1,49.42) -- cycle ;
	\draw  [fill={rgb, 255:red, 0; green, 0; blue, 0 }  ,fill opacity=1 ] (143.68,39.14) -- (154.1,39.14) -- (154.1,49.42) -- (143.68,49.42) -- cycle ;
	\draw    (154.05,23.52) -- (148.89,44.28) ;
	\draw    (213.54,23.52) -- (197.28,44.28) ;
	\draw    (203.23,23.52) -- (197.28,44.28) ;
	\draw    (193.71,23.52) -- (197.28,44.28) ;
	\draw    (183.4,23.52) -- (197.28,45.06) ;
	\draw    (169.91,23.52) -- (170.31,44.28) ;
	\draw    (143.73,23.52) -- (148.89,44.28) ;
	\draw    (148.89,44.28) -- (160.39,76.78) ;
	\draw    (170.31,44.28) -- (160.39,76.78) ;
	\draw  [fill={rgb, 255:red, 0; green, 0; blue, 0 }  ,fill opacity=1 ] (98.47,39.14) -- (108.88,39.14) -- (108.88,49.42) -- (98.47,49.42) -- cycle ;
	\draw    (44.57,23.52) -- (53.69,44.28) ;
	\draw    (54.09,23.52) -- (53.69,44.28) ;
	\draw    (64.4,23.52) -- (53.69,44.28) ;
	\draw    (227.82,23.52) -- (238.53,44.28) ;
	\draw    (238.14,23.52) -- (238.53,44.28) ;
	\draw    (247.66,23.52) -- (238.53,44.28) ;
	\draw    (261.94,23.52) -- (267.09,44.28) ;
	\draw    (272.25,23.52) -- (267.09,43.5) ;
	\draw    (300.01,23.52) -- (309.14,44.28) ;
	\draw    (309.53,23.52) -- (309.14,44.28) ;
	\draw    (319.85,23.52) -- (309.14,44.28) ;
	\draw    (309.14,44.28) -- (270.66,76) ;
	\draw    (238.53,44.28) -- (270.66,76) ;
	\draw    (266.3,43.5) -- (270.66,76) ;
	\draw    (287.72,44.28) -- (270.66,76) ;
	\draw    (128.66,23.52) -- (129.06,44.28) ;
	\draw    (109.62,23.52) -- (103.67,45.06) ;
	\draw    (100.1,23.52) -- (103.67,45.06) ;
	\draw  [color={rgb, 255:red, 0; green, 0; blue, 0 }  ,draw opacity=1 ][fill={rgb, 255:red, 0; green, 0; blue, 0 }  ,fill opacity=1 ] (190.93,112.03) .. controls (190.93,107.48) and (194.66,103.8) .. (199.26,103.8) .. controls (203.86,103.8) and (207.59,107.48) .. (207.59,112.03) .. controls (207.59,116.57) and (203.86,120.25) .. (199.26,120.25) .. controls (194.66,120.25) and (190.93,116.57) .. (190.93,112.03) -- cycle ;
	\draw  [color={rgb, 255:red, 0; green, 0; blue, 0 }  ,draw opacity=1 ][fill={rgb, 255:red, 0; green, 0; blue, 0 }  ,fill opacity=1 ] (72.73,76) .. controls (72.73,72.32) and (75.75,69.34) .. (79.48,69.34) .. controls (83.2,69.34) and (86.22,72.32) .. (86.22,76) .. controls (86.22,79.68) and (83.2,82.66) .. (79.48,82.66) .. controls (75.75,82.66) and (72.73,79.68) .. (72.73,76) -- cycle ;
	\draw  [color={rgb, 255:red, 0; green, 0; blue, 0 }  ,draw opacity=1 ][fill={rgb, 255:red, 0; green, 0; blue, 0 }  ,fill opacity=1 ] (153.65,76.78) .. controls (153.65,73.11) and (156.67,70.13) .. (160.39,70.13) .. controls (164.12,70.13) and (167.14,73.11) .. (167.14,76.78) .. controls (167.14,80.46) and (164.12,83.44) .. (160.39,83.44) .. controls (156.67,83.44) and (153.65,80.46) .. (153.65,76.78) -- cycle ;
	\draw    (53.35,44.62) -- (79.13,76.34) ;
	\draw  [color={rgb, 255:red, 0; green, 0; blue, 0 }  ,draw opacity=1 ][fill={rgb, 255:red, 255; green, 255; blue, 255 }  ,fill opacity=1 ] (232.98,44.28) .. controls (232.98,41.25) and (235.47,38.8) .. (238.53,38.8) .. controls (241.6,38.8) and (244.09,41.25) .. (244.09,44.28) .. controls (244.09,47.31) and (241.6,49.76) .. (238.53,49.76) .. controls (235.47,49.76) and (232.98,47.31) .. (232.98,44.28) -- cycle ;
	\draw  [color={rgb, 255:red, 0; green, 0; blue, 0 }  ,draw opacity=1 ][fill={rgb, 255:red, 255; green, 255; blue, 255 }  ,fill opacity=1 ] (261.54,44.28) .. controls (261.54,41.25) and (264.03,38.8) .. (267.09,38.8) .. controls (270.16,38.8) and (272.65,41.25) .. (272.65,44.28) .. controls (272.65,47.31) and (270.16,49.76) .. (267.09,49.76) .. controls (264.03,49.76) and (261.54,47.31) .. (261.54,44.28) -- cycle ;
	\draw  [color={rgb, 255:red, 0; green, 0; blue, 0 }  ,draw opacity=1 ][fill={rgb, 255:red, 255; green, 255; blue, 255 }  ,fill opacity=1 ] (282.16,44.28) .. controls (282.16,41.25) and (284.65,38.8) .. (287.72,38.8) .. controls (290.78,38.8) and (293.27,41.25) .. (293.27,44.28) .. controls (293.27,47.31) and (290.78,49.76) .. (287.72,49.76) .. controls (284.65,49.76) and (282.16,47.31) .. (282.16,44.28) -- cycle ;
	\draw  [color={rgb, 255:red, 0; green, 0; blue, 0 }  ,draw opacity=1 ][fill={rgb, 255:red, 255; green, 255; blue, 255 }  ,fill opacity=1 ] (303.58,44.28) .. controls (303.58,41.25) and (306.07,38.8) .. (309.14,38.8) .. controls (312.2,38.8) and (314.69,41.25) .. (314.69,44.28) .. controls (314.69,47.31) and (312.2,49.76) .. (309.14,49.76) .. controls (306.07,49.76) and (303.58,47.31) .. (303.58,44.28) -- cycle ;
	\draw  [color={rgb, 255:red, 0; green, 0; blue, 0 }  ,draw opacity=1 ][fill={rgb, 255:red, 255; green, 255; blue, 255 }  ,fill opacity=1 ] (96.53,23.52) .. controls (96.53,21.58) and (98.13,20) .. (100.1,20) .. controls (102.07,20) and (103.67,21.58) .. (103.67,23.52) .. controls (103.67,25.47) and (102.07,27.05) .. (100.1,27.05) .. controls (98.13,27.05) and (96.53,25.47) .. (96.53,23.52) -- cycle ;
	\draw  [color={rgb, 255:red, 0; green, 0; blue, 0 }  ,draw opacity=1 ][fill={rgb, 255:red, 255; green, 255; blue, 255 }  ,fill opacity=1 ] (106.05,23.52) .. controls (106.05,21.58) and (107.65,20) .. (109.62,20) .. controls (111.59,20) and (113.19,21.58) .. (113.19,23.52) .. controls (113.19,25.47) and (111.59,27.05) .. (109.62,27.05) .. controls (107.65,27.05) and (106.05,25.47) .. (106.05,23.52) -- cycle ;
	\draw  [color={rgb, 255:red, 0; green, 0; blue, 0 }  ,draw opacity=1 ][fill={rgb, 255:red, 255; green, 255; blue, 255 }  ,fill opacity=1 ] (125.09,23.52) .. controls (125.09,21.58) and (126.69,20) .. (128.66,20) .. controls (130.63,20) and (132.23,21.58) .. (132.23,23.52) .. controls (132.23,25.47) and (130.63,27.05) .. (128.66,27.05) .. controls (126.69,27.05) and (125.09,25.47) .. (125.09,23.52) -- cycle ;
	\draw  [color={rgb, 255:red, 0; green, 0; blue, 0 }  ,draw opacity=1 ][fill={rgb, 255:red, 255; green, 255; blue, 255 }  ,fill opacity=1 ] (140.16,23.52) .. controls (140.16,21.58) and (141.76,20) .. (143.73,20) .. controls (145.7,20) and (147.3,21.58) .. (147.3,23.52) .. controls (147.3,25.47) and (145.7,27.05) .. (143.73,27.05) .. controls (141.76,27.05) and (140.16,25.47) .. (140.16,23.52) -- cycle ;
	\draw  [color={rgb, 255:red, 0; green, 0; blue, 0 }  ,draw opacity=1 ][fill={rgb, 255:red, 255; green, 255; blue, 255 }  ,fill opacity=1 ] (150.48,23.52) .. controls (150.48,21.58) and (152.07,20) .. (154.05,20) .. controls (156.02,20) and (157.62,21.58) .. (157.62,23.52) .. controls (157.62,25.47) and (156.02,27.05) .. (154.05,27.05) .. controls (152.07,27.05) and (150.48,25.47) .. (150.48,23.52) -- cycle ;
	\draw  [color={rgb, 255:red, 0; green, 0; blue, 0 }  ,draw opacity=1 ][fill={rgb, 255:red, 255; green, 255; blue, 255 }  ,fill opacity=1 ] (166.34,23.52) .. controls (166.34,21.58) and (167.94,20) .. (169.91,20) .. controls (171.88,20) and (173.48,21.58) .. (173.48,23.52) .. controls (173.48,25.47) and (171.88,27.05) .. (169.91,27.05) .. controls (167.94,27.05) and (166.34,25.47) .. (166.34,23.52) -- cycle ;
	\draw  [color={rgb, 255:red, 0; green, 0; blue, 0 }  ,draw opacity=1 ][fill={rgb, 255:red, 255; green, 255; blue, 255 }  ,fill opacity=1 ] (179.83,23.52) .. controls (179.83,21.58) and (181.43,20) .. (183.4,20) .. controls (185.37,20) and (186.97,21.58) .. (186.97,23.52) .. controls (186.97,25.47) and (185.37,27.05) .. (183.4,27.05) .. controls (181.43,27.05) and (179.83,25.47) .. (179.83,23.52) -- cycle ;
	\draw  [color={rgb, 255:red, 0; green, 0; blue, 0 }  ,draw opacity=1 ][fill={rgb, 255:red, 255; green, 255; blue, 255 }  ,fill opacity=1 ] (190.14,23.52) .. controls (190.14,21.58) and (191.74,20) .. (193.71,20) .. controls (195.68,20) and (197.28,21.58) .. (197.28,23.52) .. controls (197.28,25.47) and (195.68,27.05) .. (193.71,27.05) .. controls (191.74,27.05) and (190.14,25.47) .. (190.14,23.52) -- cycle ;
	\draw  [color={rgb, 255:red, 0; green, 0; blue, 0 }  ,draw opacity=1 ][fill={rgb, 255:red, 255; green, 255; blue, 255 }  ,fill opacity=1 ] (199.66,23.52) .. controls (199.66,21.58) and (201.26,20) .. (203.23,20) .. controls (205.2,20) and (206.8,21.58) .. (206.8,23.52) .. controls (206.8,25.47) and (205.2,27.05) .. (203.23,27.05) .. controls (201.26,27.05) and (199.66,25.47) .. (199.66,23.52) -- cycle ;
	\draw  [color={rgb, 255:red, 0; green, 0; blue, 0 }  ,draw opacity=1 ][fill={rgb, 255:red, 255; green, 255; blue, 255 }  ,fill opacity=1 ] (209.97,23.52) .. controls (209.97,21.58) and (211.57,20) .. (213.54,20) .. controls (215.52,20) and (217.11,21.58) .. (217.11,23.52) .. controls (217.11,25.47) and (215.52,27.05) .. (213.54,27.05) .. controls (211.57,27.05) and (209.97,25.47) .. (209.97,23.52) -- cycle ;
	\draw    (129.06,45.06) -- (160.39,76.78) ;
	\draw    (197.28,45.06) -- (160.39,76.78) ;
	\draw    (160.39,76.78) -- (199.26,112.03) ;
	\draw    (270.66,76) -- (199.26,112.03) ;
	\draw    (79.48,76) -- (199.26,112.03) ;
	\draw    (79.52,44.62) -- (79.48,76) ;
	\draw    (103.67,44.28) -- (79.48,76) ;
	\draw  [color={rgb, 255:red, 0; green, 0; blue, 0 }  ,draw opacity=1 ][fill={rgb, 255:red, 255; green, 255; blue, 255 }  ,fill opacity=1 ] (224.25,23.52) .. controls (224.25,21.58) and (225.85,20) .. (227.82,20) .. controls (229.79,20) and (231.39,21.58) .. (231.39,23.52) .. controls (231.39,25.47) and (229.79,27.05) .. (227.82,27.05) .. controls (225.85,27.05) and (224.25,25.47) .. (224.25,23.52) -- cycle ;
	\draw  [color={rgb, 255:red, 0; green, 0; blue, 0 }  ,draw opacity=1 ][fill={rgb, 255:red, 255; green, 255; blue, 255 }  ,fill opacity=1 ] (234.57,23.52) .. controls (234.57,21.58) and (236.16,20) .. (238.14,20) .. controls (240.11,20) and (241.71,21.58) .. (241.71,23.52) .. controls (241.71,25.47) and (240.11,27.05) .. (238.14,27.05) .. controls (236.16,27.05) and (234.57,25.47) .. (234.57,23.52) -- cycle ;
	\draw  [color={rgb, 255:red, 0; green, 0; blue, 0 }  ,draw opacity=1 ][fill={rgb, 255:red, 255; green, 255; blue, 255 }  ,fill opacity=1 ] (244.09,23.52) .. controls (244.09,21.58) and (245.68,20) .. (247.66,20) .. controls (249.63,20) and (251.23,21.58) .. (251.23,23.52) .. controls (251.23,25.47) and (249.63,27.05) .. (247.66,27.05) .. controls (245.68,27.05) and (244.09,25.47) .. (244.09,23.52) -- cycle ;
	\draw  [color={rgb, 255:red, 0; green, 0; blue, 0 }  ,draw opacity=1 ][fill={rgb, 255:red, 255; green, 255; blue, 255 }  ,fill opacity=1 ] (258.37,23.52) .. controls (258.37,21.58) and (259.96,20) .. (261.94,20) .. controls (263.91,20) and (265.51,21.58) .. (265.51,23.52) .. controls (265.51,25.47) and (263.91,27.05) .. (261.94,27.05) .. controls (259.96,27.05) and (258.37,25.47) .. (258.37,23.52) -- cycle ;
	\draw  [color={rgb, 255:red, 0; green, 0; blue, 0 }  ,draw opacity=1 ][fill={rgb, 255:red, 255; green, 255; blue, 255 }  ,fill opacity=1 ] (268.68,23.52) .. controls (268.68,21.58) and (270.28,20) .. (272.25,20) .. controls (274.22,20) and (275.82,21.58) .. (275.82,23.52) .. controls (275.82,25.47) and (274.22,27.05) .. (272.25,27.05) .. controls (270.28,27.05) and (268.68,25.47) .. (268.68,23.52) -- cycle ;
	\draw  [color={rgb, 255:red, 0; green, 0; blue, 0 }  ,draw opacity=1 ][fill={rgb, 255:red, 255; green, 255; blue, 255 }  ,fill opacity=1 ] (296.44,23.52) .. controls (296.44,21.58) and (298.04,20) .. (300.01,20) .. controls (301.99,20) and (303.58,21.58) .. (303.58,23.52) .. controls (303.58,25.47) and (301.99,27.05) .. (300.01,27.05) .. controls (298.04,27.05) and (296.44,25.47) .. (296.44,23.52) -- cycle ;
	\draw  [color={rgb, 255:red, 0; green, 0; blue, 0 }  ,draw opacity=1 ][fill={rgb, 255:red, 255; green, 255; blue, 255 }  ,fill opacity=1 ] (305.96,23.52) .. controls (305.96,21.58) and (307.56,20) .. (309.53,20) .. controls (311.51,20) and (313.1,21.58) .. (313.1,23.52) .. controls (313.1,25.47) and (311.51,27.05) .. (309.53,27.05) .. controls (307.56,27.05) and (305.96,25.47) .. (305.96,23.52) -- cycle ;
	\draw  [color={rgb, 255:red, 0; green, 0; blue, 0 }  ,draw opacity=1 ][fill={rgb, 255:red, 255; green, 255; blue, 255 }  ,fill opacity=1 ] (316.28,23.52) .. controls (316.28,21.58) and (317.88,20) .. (319.85,20) .. controls (321.82,20) and (323.42,21.58) .. (323.42,23.52) .. controls (323.42,25.47) and (321.82,27.05) .. (319.85,27.05) .. controls (317.88,27.05) and (316.28,25.47) .. (316.28,23.52) -- cycle ;
	\draw  [color={rgb, 255:red, 0; green, 0; blue, 0 }  ,draw opacity=1 ][fill={rgb, 255:red, 255; green, 255; blue, 255 }  ,fill opacity=1 ] (41,23.52) .. controls (41,21.58) and (42.6,20) .. (44.57,20) .. controls (46.54,20) and (48.14,21.58) .. (48.14,23.52) .. controls (48.14,25.47) and (46.54,27.05) .. (44.57,27.05) .. controls (42.6,27.05) and (41,25.47) .. (41,23.52) -- cycle ;
	\draw  [color={rgb, 255:red, 0; green, 0; blue, 0 }  ,draw opacity=1 ][fill={rgb, 255:red, 255; green, 255; blue, 255 }  ,fill opacity=1 ] (50.52,23.52) .. controls (50.52,21.58) and (52.12,20) .. (54.09,20) .. controls (56.06,20) and (57.66,21.58) .. (57.66,23.52) .. controls (57.66,25.47) and (56.06,27.05) .. (54.09,27.05) .. controls (52.12,27.05) and (50.52,25.47) .. (50.52,23.52) -- cycle ;
	\draw  [color={rgb, 255:red, 0; green, 0; blue, 0 }  ,draw opacity=1 ][fill={rgb, 255:red, 255; green, 255; blue, 255 }  ,fill opacity=1 ] (60.83,23.52) .. controls (60.83,21.58) and (62.43,20) .. (64.4,20) .. controls (66.37,20) and (67.97,21.58) .. (67.97,23.52) .. controls (67.97,25.47) and (66.37,27.05) .. (64.4,27.05) .. controls (62.43,27.05) and (60.83,25.47) .. (60.83,23.52) -- cycle ;
	\draw  [fill={rgb, 255:red, 0; green, 0; blue, 0 }  ,fill opacity=1 ] (48.14,39.48) -- (58.55,39.48) -- (58.55,49.76) -- (48.14,49.76) -- cycle ;
	\draw  [fill={rgb, 255:red, 0; green, 0; blue, 0 }  ,fill opacity=1 ] (123.85,39.14) -- (134.26,39.14) -- (134.26,49.42) -- (123.85,49.42) -- cycle ;
	\draw  [fill={rgb, 255:red, 0; green, 0; blue, 0 }  ,fill opacity=1 ] (74.32,39.48) -- (84.73,39.48) -- (84.73,49.76) -- (74.32,49.76) -- cycle ;
	
	\draw    (573.14,75.56) -- (503.28,111.3) ;
	\draw    (452.87,44.1) -- (503.28,111.3) ;
	\draw    (474.3,44.1) -- (503.28,111.3) ;
	\draw    (357.26,44.44) -- (503.28,111.3) ;
	\draw    (433.02,44.88) -- (503.28,111.3) ;
	\draw    (501.29,44.88) -- (503.28,111.3) ;
	\draw    (383.01,44.1) -- (503.28,111.3) ;
	\draw    (407.62,44.1) -- (503.28,111.3) ;
	\draw  [fill={rgb, 255:red, 255; green, 255; blue, 255 }  ,fill opacity=1 ] (496.08,39.01) -- (506.5,39.01) -- (506.5,49.2) -- (496.08,49.2) -- cycle ;
	\draw  [fill={rgb, 255:red, 255; green, 255; blue, 255 }  ,fill opacity=1 ] (469.09,39.01) -- (479.51,39.01) -- (479.51,49.2) -- (469.09,49.2) -- cycle ;
	\draw  [fill={rgb, 255:red, 255; green, 255; blue, 255 }  ,fill opacity=1 ] (447.66,39.01) -- (458.08,39.01) -- (458.08,49.2) -- (447.66,49.2) -- cycle ;
	\draw  [fill={rgb, 255:red, 255; green, 255; blue, 255 }  ,fill opacity=1 ] (402.41,39.01) -- (412.83,39.01) -- (412.83,49.2) -- (402.41,49.2) -- cycle ;
	\draw  [fill={rgb, 255:red, 255; green, 255; blue, 255 }  ,fill opacity=1 ] (377.4,49.48) -- (387.7,49.48) -- (387.7,39.39) -- (377.4,39.39) -- cycle ;
	\draw  [fill={rgb, 255:red, 255; green, 255; blue, 255 }  ,fill opacity=1 ] (352.05,39.35) -- (362.47,39.35) -- (362.47,49.54) -- (352.05,49.54) -- cycle ;
	\draw  [fill={rgb, 255:red, 255; green, 255; blue, 255 }  ,fill opacity=1 ] (427.81,39.01) -- (438.23,39.01) -- (438.23,49.2) -- (427.81,49.2) -- cycle ;
	\draw  [fill={rgb, 255:red, 255; green, 255; blue, 255 }  ,fill opacity=1 ] (566.71,69.28) -- (579.56,69.28) -- (579.56,81.85) -- (566.71,81.85) -- cycle ;
	\draw  [color={rgb, 255:red, 0; green, 0; blue, 0 }  ,draw opacity=1 ][fill={rgb, 255:red, 255; green, 255; blue, 255 }  ,fill opacity=1 ] (494.94,111.3) .. controls (494.94,106.79) and (498.67,103.14) .. (503.28,103.14) .. controls (507.88,103.14) and (511.61,106.79) .. (511.61,111.3) .. controls (511.61,115.8) and (507.88,119.45) .. (503.28,119.45) .. controls (498.67,119.45) and (494.94,115.8) .. (494.94,111.3) -- cycle ;

	\draw (357.8,44.2) node  [font=\scriptsize]  {$2$};
	\draw (383,44.2) node  [font=\scriptsize]  {$2$};
	\draw (408.1,44.2) node  [font=\scriptsize]  {$2$};
	\draw (433.4,44.2) node  [font=\scriptsize]  {$2$};
	\draw (453.6,44.2) node  [font=\scriptsize]  {$2$};
	\draw (474.6,44.2) node  [font=\scriptsize]  {$2$};
	\draw (501.5,44.2) node  [font=\scriptsize]  {$2$};
	\draw (503.6,111.5) node  [font=\small]  {$0$};
	\draw (573.4,75.56) node  [font=\footnotesize]  {$1$};

\end{tikzpicture}
\caption{On the left we represented the first three generations of an infinite GW process which has $d=4$ and $m=1.75$. We also choose $a=0.5$, so an individual is fecund if it has at least $\lceil adm\rceil=4$ descendants in the first generation, or $\lceil adm^2\rceil=7$ decendants in the second generation, or $\lceil adm^3\rceil=11$ in the third, etc. Here only the ancestor $ \circ$ is fecund, with $n_\circ=2$. This can be observed by revealing only the individuals drawn in black, with in particular $B_\circ=7$ individuals of generation 2 and $A_\circ=1$ individual of generation 1 with unrevealed offspring, all represented by squares. On the right, the corresponding multitype tree $T$, with $\circ$ as single internal vertex, as the other vertices with type 1 and 2 correspond to non-fecund individuals of the GW process.
}
\end{figure}


\begin{lemma}\label{lem:InternalNodes}
	Set $a=\frac {8m}{m-1}$. Then, for any positive integer $r$,
	the probability that $T$ contains at least $r$ internal nodes is at most $e^{-(\log 3-1) r}$.
\end{lemma}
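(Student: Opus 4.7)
The plan is to bound $|i_T|$ via an exponential-moment argument exploiting that the probability of being internal decays geometrically in the type. The crucial preliminary step is the estimate $p_t := \p(v \text{ internal} \mid v \text{ of type } t) \le 1/(adm^t)$, obtained by applying Doob's maximal inequality to $v$'s subtree martingale $\tilde W_j = \tilde Z_j/m^j$, which starts at $\tilde W_t = 1/m^t$. Although in the recursive construction an internal node can have a huge number of children, the internal probability for each child decays with its type, and this is what must tame the branching structure.

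I would then set up a self-similar recursion for $\phi_t(\lambda) := \E e^{\lambda |i_T^{(t)}|}$, the exponential moment of the number of internal nodes in the subtree of $T$ rooted at a type-$t$ individual. By independence of the subtrees attached to distinct children,
\[
\phi_t(\lambda) \;=\; 1 - p_t + p_t\, e^\lambda\, \E\!\left[\prod_c \phi_{t_c}(\lambda) \,\Bigm|\, \text{internal}\right],
\]
the children's types being $t_c \in \{k-1, k\}$ with $k$ the witness time. With an ansatz of the form $\phi_t(\lambda) \le 1 + C(\lambda)\, p_t$ and the sharpened child-count bounds $\#\{\text{type-}(k{-}1)\} \le am^{k-1}(d-m)$ (from $I_k \ge am^k$) and $\#\{\text{type-}k\} \le adm^k + d$, the recursion reduces to a scalar fixed-point inequality $C + 1 \ge e^{\lambda + C S}$, where $S = \sum_c p_{t_c}$ is bounded by a constant. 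Markov's inequality then yields $\p(|i_T| \ge r) \le \phi_0(\lambda)\, e^{-\lambda r}$.

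The main difficulty is closing the recursion at the claimed $\lambda = \log 3 - 1$: under the crude $L^1$ Doob bound, $S$ comes out close to $2$, which is too large. The proof must therefore sharpen $p_t$, most likely by using the $L^2$ Doob bound together with the uniform variance estimate $\E W_\infty^2 \le (d-1)/(m-1)$ for bounded offspring (yielding $p_t = O(1/m^{2t})$), and the specific choice $a = 8m/(m-1)$ is calibrated precisely so that the resulting constants fit together in the fixed-point inequality. Any multiplicative prefactor $\phi_0(\lambda)$ on the tail can then be absorbed by the factor $c_1 = 2$ noted in the remark, or by a case split at small $r$ where the trivial bound $1/(ad)$ already suffices.
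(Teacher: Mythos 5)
Your plan is correct in outline and, at its decisive step, coincides with the paper's proof: the paper also observes that the $L^1$ maximal inequality is too weak and instead stops the submartingale $W_l^2$ to get $\p(\sup_l W_l\ge x)\le \big(1+\tfrac{d}{m-1}\big)x^{-2}$ (using $\Var Z_1\le md$; your bound $\E W_\infty^2\le \tfrac{d-1}{m-1}$ is a slightly sharper version of the same estimate), whence $adm^t\,p_t\le \tfrac1a\big(\tfrac1d+\tfrac1{m-1}\big)\le \tfrac{m}{a(m-1)}=\tfrac18$ — this is exactly what the choice $a=\tfrac{8m}{m-1}$ is calibrated for. Where you differ is the combinatorial wrapper: the paper dominates the number of internal children of an internal node by $\operatorname{Bin}(adm^{k}+d,p_{k})\oplus\operatorname{Bin}(adm^{k-1},p_{k-1})\preceq\operatorname{Pois}(1/3)$ and then applies the Chernoff bound to the total progeny, $\p(\operatorname{Pois}(r/3)\ge r)\le (e/3)^r$, which delivers the constant $\log 3-1$ with no prefactor; you instead run a generating-function fixed point. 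Both work, and yours is arguably more flexible, but the paper's gives the stated constant directly.

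The one step you leave unverified — whether $S$ is small enough for the fixed point to close — does go through, and you should check it rather than defer it: with the $L^2$ bound, $S\le adm^{k}p_{k}+dp_{k}+adm^{k-1}p_{k-1}\le \tfrac18+\tfrac18+\tfrac1{8a}<\tfrac38$, so taking $C=1$ the inequality $1+C\ge e^{\lambda+CS}$ holds for all $\lambda\le \log 2-\tfrac38\approx 0.318$, well above $\log 3-1\approx 0.099$; since moreover $p_0\le \tfrac1{8ad}\le\tfrac1{128}$, the prefactor $\phi_0(\lambda)\le 1+p_0$ is so close to $1$ that $\phi_0(\lambda)e^{-\lambda r}\le e^{-(\log 3-1)r}$ for every $r\ge 1$, so you even recover the lemma verbatim without invoking $c_1=2$. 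Two small points of rigour to add: the types of descendants are unbounded, so the recursion should be closed by a uniform-in-$t$ ansatz applied to depth-truncated trees (guaranteeing $\phi_t(\lambda)<\infty$ before you manipulate it), and you should note that the children's subtrees are genuinely fresh, unconditioned copies of the branching process given the exploration up to the witness time, which is what makes the product formula and the use of the unconditional $p_{t_c}$ legitimate.
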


\begin{lemma} \label{lem:Bernstein}
	With $a=\frac {8m}{m-1}$, if you condition on $T$ containing less than $r$ internal nodes, then the probability of $\{W_n>5 a dr\}$ is bounded by $e^{- \frac 2 3 r}$.
\end{lemma}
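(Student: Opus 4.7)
The plan is to apply an exponential Chernoff-type inequality to the recursive bound \eqref{recursive_bound} after dividing by $m^n$. Writing $\tilde W_n^{(j,k_j)}:=\tilde Z_n^{(j,k_j)}/m^n$, the inequality becomes $W_n\le |i_T|\,ad+\sum_{j\in\ell_T,\,k_j\le n}\tilde W_n^{(j,k_j)}$, so on $\{|i_T|<r\}$ the event $\{W_n>5adr\}$ forces $\sum_j\tilde W_n^{(j,k_j)}>4adr$. Conditional on $T$, the summands are independent, each bounded above by $ad$ because of the conditioning $\tilde Z_l^{(j,k_j)}\le adm^l$, and the stochastic domination stated just before the lemma gives $\E[\tilde W_n^{(j,k_j)}\mid T]\le 1/m^{k_j}$.

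Next I would control the total conditional mean $\sum_j 1/m^{k_j}$ by grouping leaves under their parent internal node in $T$. An internal node whose crossing is first witnessed at generation $K\ge 1$ owns at most $adm^K+d$ children in $T$ of type $K$ and at most $adm^{K-1}$ children of type $K-1$, contributing to $\sum 1/m^{k_j}$ at most $ad+d/m^K$ and $ad$, respectively. Summing over the at most $r-1$ internal nodes and using $K\ge 1$ throughout gives
\[
\sum_{j\in\ell_T,\,k_j\le n}\E\!\left[\tilde W_n^{(j,k_j)}\,\middle|\,T\right]\le \left(2ad+\tfrac{d}{m}\right)(r-1)\le \left(2+\tfrac{1}{am}\right)adr,
\]
and with $a=8m/(m-1)$ the prefactor satisfies $2+1/(am)=2+(m-1)/(8m^2)\le 65/32$.

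Then I would apply the standard Chernoff estimate for $[0,ad]$-valued random variables: $\E[\exp(\lambda X)\mid T]\le \exp\bigl((\E[X\mid T]/(ad))(e^{\lambda ad}-1)\bigr)$ for each summand $X$, together with Markov's inequality. Setting $u=\lambda ad$ and denoting the prefactor above by $\alpha\le 65/32$, this produces
\[
\p\!\left(\sum_j\tilde W_n^{(j,k_j)}>4adr\,\middle|\,T\right)\le \exp\!\Bigl(r\bigl[\alpha(e^u-1)-4u\bigr]\Bigr).
\]
Optimising by $e^u=4/\alpha$ yields the exponent $r\bigl(4-\alpha-4\ln(4/\alpha)\bigr)$, which is strictly less than $-2r/3$ for every $\alpha\le 65/32$. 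Averaging over $T$ under $\{|i_T|<r\}$ then gives the claimed $e^{-2r/3}$.

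The main obstacle I expect lies not in the Chernoff step but in the combinatorial bookkeeping of the previous paragraph: one must verify that every leaf of $T$ is a child of a unique internal node, and handle corner cases such as the root of type $0$ or the possibility $K=n_v+1$ which can make the second group of children empty or redundant, without spoiling the per-node budget $2ad+d/m$. Once this is secured, the one-variable Chernoff optimisation is routine and the constant $2/3$ comes out with comfortable slack.
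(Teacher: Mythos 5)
Your proposal is correct and follows the same skeleton as the paper's proof — normalise \eqref{recursive_bound} by $m^n$, reduce $\{W_n>5adr\}$ on $\{|i_T|<r\}$ to a deviation of $\sum_j\tilde W_n^{(j,k_j)}$ above its conditional mean, and control that mean by charging each leaf to its parent internal node (your per-node budget $2ad+d/m$ is the same computation as the paper's cruder $3ad$) — but it diverges at the concentration step. The paper uses Bernstein's inequality, which requires the second-moment bound $\Var\tilde W_n^{(j,k_j)}\le m^{-k_j}\bigl(1+\tfrac{d}{m-1}\bigr)$ inherited from $\E[W_\infty^2]$, and it is precisely here that the choice $a=\frac{8m}{m-1}$ is used in earnest, to turn $\frac{m}{m-1}3ad^2|i_T|$ into $\frac38a^2d^2|i_T|$ so that the variance term is commensurate with $(adx)^2$. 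Your Chernoff--Bennett bound for $[0,ad]$-valued summands uses only the first moment together with the almost-sure bound $\tilde W_n^{(j,k_j)}\le ad$ coming from the conditioning, so you bypass the variance computation entirely and need $a$ only through the innocuous estimate $\tfrac1{am}\le\tfrac1{32}$; your numerics check out ($4-\tfrac{65}{32}-4\ln\tfrac{128}{65}\approx-0.742<-\tfrac23$), and the argument is uniform over the admissible $T$ since the exponent is monotone in $\alpha$. The corner cases you flag are genuinely there but harmless: if $|i_T|=0$ the root is the unique leaf and $W_n\le ad<5adr$ deterministically, while if $|i_T|\ge1$ the root is internal and every leaf is the child of exactly one internal node, so the bookkeeping closes; the case where the type-$(K-1)$ group is empty only improves the bound. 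Net effect: your route is slightly more elementary and more robust in its dependence on $a$, at the cost of a marginally more involved one-variable optimisation than plugging into Bernstein.
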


With these two lemmas at hand, we obtain for any positive integer $r$ the inequality
\[\p(W_n>5a d r)\le e^{-(\log 3-1) r} + e^{- \frac 2 3 r},\]
which translates into Theorem~\ref{thm:tail_GWBall} when taking $x$ nonnegative real number.

\begin{proof}[Proof of Lemma~\ref{lem:InternalNodes}]
	Recall that the martingale $W_n=Z_n/m^n$ converges almost surely and in $L^2$ to $W_\infty$ with $\E[W_\infty]=1$ and $\Var W_\infty=\frac {\Var Z_1}{m(m-1)}$. Stopping the submartingale $W_n^2$ at the first time it hits $[x,+\infty)$, we then obtain a bound for the tail distribution of $\sup_l W_l$ as
	\begin{align*}
	\p(\sup_l W_l\ge x)&\le x^{-2}\E[W_\infty^2 \1_{\{\sup W_l\ge x\}}]\le \left(1+\frac {\Var Z_1}{m(m-1)}\right) x^{-2}\\
	&\le \left(1+\frac {d}{(m-1)}\right) x^{-2},
	\end{align*}
	where in the last inequality we use $\Var Z_1\le \E[Z_1]d=m d$.
	
	Thus, a vertex of $T$ of type $n$ is an internal node with probability 
	\[p_n:=\p\left(\sup_l W_l\ge ad m^n \right)\le 
	\frac 1 {a^2 m^{2n} d^{2n}}+ \frac 1 {a^2 m^{2n} d (m-1)}.
	\]
	For any $n$, we have 
	\[adm^n p_n\le \frac 1 {a}\left(\frac 1 d + \frac 1 {m-1}\right)\le \frac m {a (m-1)}\le \frac 1 8,
	\]
	as well as $p_n\le p_0\le 1/16$.
	We now look at the subtree of internal nodes of $T$. Note that if an internal node has in $T$ (at most) $ad m^{n+1}+d$ children of type $n+1$ and $adm^n$ of type $n$, amongst these children, the number of internal nodes is bounded by
	\begin{align*}
	&\hspace{5mm} \operatorname{Bin}(adm^{n+1}+d,p_{n+1}) \oplus \operatorname{Bin}(ad m^n,p_{n})\\
	&\preceq\  \operatorname{Pois}(2ad m^{n+1}\frac {p_{n+1}}{1-p_{n+1}})\oplus 
	\operatorname{Pois}(ad m^{n}\frac {p_n}{1-p_n})\\
	& \preceq\  \operatorname{Pois}\left(\frac 3 8. \frac {16}{15}\right)=	\operatorname{Pois}\left(\frac 1 3\right), 
	\end{align*}
	where $\operatorname{Bin}(n,p)$ and $\operatorname{Pois}(\lambda)$ stand here for random variables with binomial (respectively Poisson) distribution, $\oplus$ means we sum independent random variables, and $\preceq$ stands for stochastic domination.
	The probability that the root is an internal node is also bounded by the probability that a $\operatorname{Pois}(1/3)$ is positive. Thus in order to have at least $r$ internal nodes, you need to have a $\operatorname{Pois}(mr)$ larger than $r$ where $m=1/3$, which we bound by the usual Chernoff bound
	\[
	\p(\operatorname{Pois}(mr)\ge r) \le e^{-mr} (em)^{r}\le (em)^{r}= e^{-(\log 3-1) r}.
	\]
\end{proof}

\begin{proof}[Proof of Lemma~\ref{lem:Bernstein}]
	Suppose the tree $T$ is finite, with $|i_T|$ internal nodes and a set of leaves  
	$L_T$. For given $n\ge 0$, we bound the size of $Z_n$ by splitting the individuals with respect to their most recent ancestor belonging to $T$ (this most recent ancestor is always well-defined, as the ancestor of the branching process is in $T$). Note that for any vertex $x$ of $T$, at most $adm^n$ individuals of generation $n$ can share $x$ as their most recent ancestor in $T$. We thus obtain the following upper bound for $Z_n$:	
	\begin{equation}\label{recursive_bound}
	Z_n\le | i_T | ad m^n+ \sum_{\ell\in L_T,|\ell|\le n}  Z^\ell_n,
	\end{equation}
	where we recall the notation $Z^\ell_n$ for the number of descendants of the individual $\ell$ at generation $n$. If we condition on $T$, the processes $Z^\ell$, for $\ell\in L_T$, are independent, and $\big(  Z^\ell_k\big)_{k\ge |\ell|}$ is of course the branching process, initiated at time $k=|\ell|$ with $  Z^{\ell}_{|\ell|}=1$, and conditioned\footnote{Indeed, the conditioning on $T$ and on $\ell$ being a leaf of $T$, translates into the conditioning on the individual $\ell$ not being fecund.} on $\Big\{\forall k\ge |\ell|,   Z^\ell_k\le ad m^k\Big\}$. Its value at time $n$ is stochastically dominated by $Z_{n-\ell} \wedge ad m^n$. Dividing by $m^n$, we get
	\begin{equation}
	W_n\le |i_T| ad + \sum_{\ell\in L_T,|\ell|\le n} W_n^{\ell},
	\end{equation}
	where the random variables $  W_n^{\ell}:= m^{-n}  Z_n^{\ell}$ are independent and stochastically dominated by $m^{-|\ell|}W_{n-|\ell|}\wedge ad$. Hence,
	\begin{align*}
	\E[  W_n^{\ell}]&\le m^{-|\ell|},\\
	\Var   W_n^{\ell}&\le m^{-2|\ell|} \E[W_\infty^2]\le  m^{-2|\ell|}  \left(1+\frac {d}{m-1}\right) \\
	&\le  m^{-|\ell|}  \left(1+\frac {d}{m-1}\right).
	\end{align*}
	Summing over the leaves of $T$, we obtain
	\[
	\sum_{\ell\in L_T} \E[  W_n^{\ell}]\le \sum_{\ell\in L_T}  m^{-|\ell|}\le  3 a d |i_T|,
	\]
	using that the sum of the $m^{-|\ell|}$ over the leaves which are the children of a given internal node, is necessarily bounded by $3ad$. Similarly,
	\[
	\sum_{\ell \in L_T} \Var   W_n^{\ell} \le\left(1+\frac {d}{m-1}\right) 3 a d|i_T|\le \frac m{m-1}3 ad^2|i_T| =\frac 3 8 a^2 d^2 |i_T|.
	\]
	Now we have $\E\left[W_n\ \big|\ T\right]\le 4a d |i_T|$, and Bernstein's inequality states
	\begin{align*}
	\p\left(W_n\ge 4 a d|i_T|+ad x\ \Big|\, T\right)
	&\le \exp\left(-\frac {\frac 1 2 \left(adx\right)^2}{\sum \Var   W_n^{\ell} + \frac 1 3 (ad) (a dx)}\right)\\
	&\le \exp\left(-\frac x{\frac 3 4 \frac {|i_T|}x+\frac 2 3}\right),
	\end{align*}
	and hence
	\[
	\p\left(W_n\ge 5adx\ \Big|\, |i_T|\le x\right)\le e^{-\frac {12}{17} x}\le e^{-\frac 2 3 x}.
	\]
\end{proof}

\section*{Acknowledgements}

\noindent
In realising this work John Fernley was a postdoctoral researcher at ENS Lyon, and is now supported by NKFI grant KKP 137490.

\end{document}